\numberwithin{equation}{section}
\newtheorem{cro}{Corollary}[section]
\newtheorem{defn}{Definition}[section]
\newtheorem{thm}{Theorem}[section]
\newtheorem{lem}{Lemma}[section]
\begin{document}

\title{Large Deviation Properties in Some
Nonuniformly Hyperbolic Systems Via Pesin Theory
 \footnotetext {* Corresponding author}
  \footnotetext {2010 Mathematics Subject Classification: 37D25, 37C40, 60F10}}
\author{Zheng Yin$^{1}$, Ercai Chen$^{*1,2}$ \\
  \small   1 School of Mathematical Sciences and Institute of Mathematics, Nanjing Normal University,\\
   \small   Nanjing 210023, Jiangsu, P.R.China\\
    \small 2 Center of Nonlinear Science, Nanjing University,\\
     \small   Nanjing 210093, Jiangsu, P.R.China.\\
      \small    e-mail: zhengyinmail@126.com ecchen@njnu.edu.cn  \\
}
\date{}
\maketitle

\begin{center}
 \begin{minipage}{120mm}
{\small {\bf Abstract.} This article is devoted to level-1
large deviation properties in some nonuniformly
hyperbolic systems via Pesin theory. In particular, our result can
be applied to the nonuniformly hyperbolic diffeomorphisms described by Katok
and several other classes of diffeomorphisms derived from Anosov systems. }
\end{minipage}
 \end{center}

\vskip0.5cm {\small{\bf Keywords and phrases:} Large deviations, Pesin set, weak shadowing property.}\vskip0.5cm
\section{Introduction}
The classical theory of large deviations deals with
the rates of convergence of sequence of random variables to some
limit distribution. We refer to Ellis' book \cite{Ell} for a general theory
of large deviations and its background in statistical mechanics.
We consider large deviations for dynamical systems. Let $(M,d,f)$ be a topological
dynamical system and let $\mu$ be a ergodic measure. Given an observable $\varphi\in L^1(\mu)$, let
$S_n\varphi(x)=\sum_{i=0}^{n-1}\varphi(f^ix)$. By Birkhoff ergodic theorem,
$(1/n)S_n\varphi(x)\to\int\varphi d\mu$ as $n\to\infty$ for almost all $x\in M$.
We are interested in the rate of convergence of this time average.
More generally, given a reference measure $m$, one interesting question is to
study the asymptotic behaviour  of
\begin{align*}
m\left\{x\in M:\frac{1}{n}S_n\varphi(x)\in E\right\}
\end{align*}
for some $E\subseteq \mathbb{R}$. This topic had contributions
by many authors in the recent years. We refer the reader to
\cite{AraPac,Chu,ComRiv,EizKifWei,Kif,MelNic,PfiSul,PolSha,ReyYou,Var,You,Yur} and the references therein.

The large deviation principle can be divided into two steps, i.e., to obtain the upper bound of
\begin{align*}
\limsup\limits_{n\to\infty}\frac{1}{n}\log m\left\{x\in M:\frac{1}{n}S_n\varphi(x)\in E\right\}
\end{align*}
and the lower bound of
\begin{align}\label{equ1.1}
\liminf\limits_{n\to\infty}\frac{1}{n}\log m\left\{x\in M:\frac{1}{n}S_n\varphi(x)\in E\right\}.
\end{align}
The upper bound is usually easier to establish than the lower bound.
An effective method in dynamical systems to obtain the lower estimate for
(\ref{equ1.1}) is ``orbit-gluing approach''. For example, Young \cite{You} formulated quite general
large deviation for dynamical systems via specification property. Pfister and Sullivan \cite{PfiSul} introduced
approximate product property to study the large deviations estimates for $\beta$-shifts.
Yamamoto \cite{Yam} considered the large deviation principle for group automorphisms which
satisfy the almost specification property.
Varandas \cite{Var} introduced a measure theoretical non-uniform specification to obtain large deviations
estimstes for weak Gibbs measures.

Recently, many efforts have been made to extend the theory of large deviations to the scope of
nonuniformly hyperbolic systmes. Ara\'{u}jo and Pac\'{i}fico \cite{AraPac} gave lower as well
as upper bounds in the noninvertible context. Melbourne and Nicol \cite{MelNic}
gave optimal bounds in the Young tower situation (both invertible and noninvertible).
Large dviations principles were also obtained by Yuri \cite{Yur} in the context
shifts with countably many symbols and by Comman and Rivera \cite{ComRiv} for nonuniformly expanding
rational maps. More recently, Chung \cite{Chu} obtained a sufficient condition to hold a full large deviation
principle for Markov tower maps induced from return time functions and also
gave counterexamples when [6]'s conditions do not hold.

Motivated by the work of Young \cite{You} and Varandas \cite{Var} our purpose here is to
give a contribution for large deviations estimates in some nonuniformly hyperbolic systems.
The main tool we use is weak shadowing lemma in Pesin theory. Particularly, our result can be applied
(i) to the diffeomorphisms on surfaces,
(ii) to the nonuniformly hyperbolic diffeomorphisms described by Katok
and several other classes of diffeomorphisms derived from Anosov systems.

This article is organized as follows. In section 2, we provide some notions
and results of Pesin theory and state the main result. Section 3 is devoted
to the proof of the main result. Examples and applications are given in section 4.


\section{ Preliminaries}
In this section, we first present some notations to be used in this paper.
Then we introduce some notions and results of
Pesin theory \cite{BarPes2,KatHas,Pol} and state the main results.

We denote by $\mathscr M(M,f)$, $\mathscr M_{\rm inv}(M,f)$
and $\mathscr M_{\rm erg}(M,f)$ the set of all Borel probability measures,
$f$-invariant Borel probability measures and ergodic measures
respectively. It is well known that $\mathscr M(M,f)$ equiped
with weak* topology is a compact metrisable
space. We will denote by $D$ the metric on $\mathscr M(M,f)$.
For an $f$-invariant subset $Z\subset X,$ let
$\mathscr M_{\rm inv}(Z,f)$ denote the subset of $\mathscr M_{\rm inv}(M,f)$
for which the measures $\mu$ satisfy $\mu(Z)=1$ and $\mathscr M_{\rm erg}(Z,f)$
denote those which are ergodic. Denote by $C^0(M)$ the space of continuous
functions from $M$ to $\mathbb{R}$ with the sup norm. For $\varphi\in C^0(M)$
and $n\geq1$ we denote $\sum_{i=0}^{n-1}\varphi(f^ix)$ by $S_n\varphi(x)$.
For every $\epsilon>0$, $n\in \mathbb{N}$ and a point $x\in M$, define
$B_n(x,\epsilon)=\{y\in M:d(f^ix,f^iy)<\epsilon,\forall 0\leq i\leq n-1\}$.

Suppose $M$ is a compact connected boundary-less Riemannian
$n$-dimension manifold and $f:X\to X$ is a $C^{1+\alpha}$
diffeomorphism. Let $\mu\in\mathscr M_{\rm erg}(Z,f)$ and $Df_x$ denote
the tangent map of $f$ at $x\in M.$ We say that
$x\in X$ is a regular point of $f$ if there exist
$\lambda_1(\mu)>\lambda_2(\mu)>\cdots>\lambda_{\phi(\mu)}(\mu)$ and a
decomposition on the tangent space $T_x
M=E_1(x)\oplus\cdots\oplus E_{\phi(\mu)}(x)$ such that
\begin{align*}
\lim\limits_{n\to\infty}\frac{1}{n}\log\|(Df^n_x)u\|=\lambda_j(x),
\end{align*}
where $0\neq u\in E_j(x), 1\leq j\leq \phi(\mu).$ The number $\lambda_j(x)$ and
the space $E_j(x)$ are called the Lyapunov exponents and the eigenspaces of
$f$ at the regular point $x,$ respectively. Oseledets theorem \cite{Ose} say that all
regular points forms a Borel set with total measure. For a regular point
$x\in M$, we define
\begin{align*}
\lambda^+(\mu)=\min\{\lambda_i(\mu)|\lambda_i(\mu)\geq0,1\leq i\leq \phi(\mu)\}
\end{align*}
and
\begin{align*}
\lambda^-(\mu)=\min\{-\lambda_i(\mu)|\lambda_i(\mu)\leq0,1\leq i\leq \phi(\mu)\}.
\end{align*}
We appoint $\min\emptyset=0$. An ergodic measure $\omega$ is hyperbolic if $\lambda^+(\omega)$
and $\lambda^-(\omega)$ are both non-zero.

\begin{defn}
Given $\beta_1,\beta_2\gg\epsilon>0$ and for all $k\in\mathbb{Z}^+,$
the hyperbolic block $\Lambda_k=\Lambda_k(\beta_1,\beta_2,\epsilon)$
consists of all points $x\in M$  such that there exists a
decomposition $T_xM=E_x^s\oplus E_x^u$ satisfying:
\begin{itemize}
  \item $Df^t(E_x^s)=E^s_{f^tx}$ and $Df^t(E_x^u)=E^u_{f^tx};$
  \item $\|Df^n|E^s_{f^tx}\|\leq e^{\epsilon k}e^{-(\beta_1-\epsilon)n}e^{\epsilon|t|},\forall t\in\mathbb{Z},n\geq1;$
  \item $\|Df^{-n}|E^u_{f^tx}\|\leq e^{\epsilon k}e^{-(\beta_2-\epsilon)n}e^{\epsilon|t|},\forall t\in\mathbb{Z},n\geq1;$
  \item $\tan (\angle(E^s_{f^tx},E^u_{f^tx}))\geq e^{-\epsilon k}e^{-\epsilon|t|},\forall t\in\mathbb{Z}.$
\end{itemize}
\end{defn}

\begin{defn}
$
\Lambda=\Lambda(\beta_1,\beta_2,\epsilon)=\bigcup\limits_{k=1}^\infty\Lambda_k(\beta_1,\beta_2,\epsilon)
$ is a Pesin set.
\end{defn}
The following statements are elementary properties of Pesin blocks (see \cite{Pol}):
\begin{itemize}
  \item[(1)] $\Lambda_1\subseteq\Lambda_2\subseteq\cdots;$
  \item[(2)] $f(\Lambda_k)\subseteq\Lambda_{k+1},f^{-1}(\Lambda_k)\subseteq\Lambda_{k+1};$
  \item[(3)] $\Lambda_k$ is compact for each $k\geq1$;
  \item[(4)] For each $k\geq1$, the splitting $\Lambda_k\ni x\mapsto E_x^s\oplus E_x^u$ is continuous.
\end{itemize}
The Pesin set $\Lambda(\beta_1,\beta_2,\epsilon)$ is an
$f$-invariant set but usually not compact. Given an ergodic measure
$\mu\in\mathscr M_{\rm erg}(M,f)$, denote by $\mu|\Lambda_l$ the
conditional measure of $\mu$ on $\Lambda_l.$ Let
$\widetilde{\Lambda}_l=$ supp$(\mu|\Lambda_l)$ and
$\widetilde{\Lambda}_\mu=\bigcup_{l\geq1}\widetilde{\Lambda}_l.$
If $\omega$ is an ergodic hyperbolic measure for $f$ and $\beta_1\leq\lambda^-(\omega)$ and
$\beta_2\leq\lambda^+(\omega)$, then $\omega\in \mathscr M_{\rm inv}(\widetilde{\Lambda}_\omega,f)$.

Let $\{\delta_k\}_{k=1}^\infty $ be a sequence of positive real
numbers. Let $\{x_n\}_{n=-\infty}^\infty$ be a sequence of points in
$\Lambda=\Lambda(\beta_1,\beta_2,\epsilon)$ for which there exists a
sequence $\{s_n\}_{n=-\infty}^{\infty}$ of positive integers satisfying:
\begin{equation*}\begin{split}
&\text{(a) } x_n\in\Lambda_{s_n},\forall n\in\mathbb{Z};\\
&\text{(b) } |s_n-s_{n-1}|\leq 1, \forall n\in\mathbb{Z};\\
&\text{(c) } d(f(x_n),x_{n+1})\leq\delta_{s_n},  \forall n\in\mathbb{Z},
\end{split}\end{equation*}
then we call $\{x_n\}_{n=-\infty}^\infty$ a $\{\delta_k\}_{k=1}^\infty$ pseudo-orbit.  Given $\eta>0$
a point $x\in M$ is an $\eta$-shadowing point for the
$\{\delta_k\}_{k=1}^\infty$ pseudo-orbit  if $d(f^n(x),x_n)\leq
\eta\epsilon_{s_n},\forall n\in\mathbb{Z},$ where
$\epsilon_k=\epsilon_0e^{-\epsilon k}$ and $\epsilon_0$ is a constant only dependent on the
system of $f$.

\vskip0.3cm

\noindent \textbf{Weak shadowing lemma.} \cite{Hir,KatHas,Pol}
{\it
Let $f:M\to M$ be a $C^{1+\alpha}$ diffeomorphism, with a non-empty
Pesin set $\Lambda=\Lambda(\beta_1,\beta_2,\epsilon)$ and fixed
parameters, $\beta_1,\beta_2\gg \epsilon>0.$ For $\eta>0$ there exists
a sequence $\{\delta_k\}$ such that for any $\{\delta_k\}$ pseudo
orbit there exists a unique $\eta$-shadowing point.
}
\vskip0.3cm

Let $m$ be a finite Borel measure on $M$. We think of $m$ as our reference measure.
We define
\begin{align*}
\mathcal{V}^+=\big\{\psi\in C^0(M):\exists C,\epsilon>0
\text{ such that } \forall x\in M \text{ and }& \forall n\geq0, \\
&mB_n(x,\epsilon)\leq Ce^{-S_n\psi(x)}\big\}
\end{align*}
and
\begin{align*}
\mathcal{V}^-=\big\{\psi\in C^0(M):&\exists \text{ arbitrarily small }\epsilon>0
\text{ and } C=C(\epsilon)>0 \\
&\text{ such that } \forall x\in M \text{ and } \forall n\geq0,
mB_n(x,\epsilon)\geq Ce^{-S_n\psi(x)}\big\}.
\end{align*}
For $\varphi\in C^0(M)$ and $E\subseteq \mathbb{R}$, we write
\begin{align*}
\overline{R}(\varphi,E)=
\limsup_{n\to \infty}\frac{1}{n}\log m\left\{x\in M:\frac{1}{n}S_n\varphi(x)\in E\right\}
\end{align*}
and
\begin{align*}
\underline{R}(\varphi,E)=
\liminf_{n\to \infty}\frac{1}{n}\log m\left\{x\in M:\frac{1}{n}S_n\varphi(x)\in E\right\}.
\end{align*}
Now, we state the main result of this paper as follows:
\begin{thm}\label{thm2.2}
Let $f:M\to M$ be a $C^{1+\alpha}$ diffeomorphism of a compact Riemannian manifold, with a non-empty
Pesin set $\Lambda=\Lambda(\beta_1,\beta_2,\epsilon)$ and fixed
parameters, $\beta_1,\beta_2\gg \epsilon>0.$ Then for every $\varphi\in C^0(M)$  and $c\in \mathbb{R}$,
the following hold:
\begin{enumerate}
\item[(1)] For $\psi\in \mathcal{V}^+$, we have
\begin{align*}
\overline{R}(\varphi,[c,\infty))
\leq\sup\left\{h_\nu(f)-\int\psi d\nu:\nu\in \mathscr M_{\rm inv}(M,f),\int\varphi d\nu\geq c\right\}.
\end{align*}
\item[(2)] For $\psi\in \mathcal{V}^-$ and $\mu\in \mathscr{M}_{erg}(M,f)$, we have
\begin{align*}
\underline{R}(\varphi,(c,\infty))
\geq\sup\left\{h_\nu(f)-\int\psi d\nu:\nu\in \mathscr{M}_{inv}(\widetilde{\Lambda}_\mu,f),\int\varphi d\nu> c\right\}.
\end{align*}
\end{enumerate}
\end{thm}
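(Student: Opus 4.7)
The two parts of Theorem~\ref{thm2.2} are essentially independent. Part (1) is a classical covering argument driven by $\mathcal{V}^{+}$ together with a Katok--Misiurewicz-type entropy upper bound; part (2) is the substantive one, and is where the weak shadowing lemma plays its role, combined with Katok's entropy formula for hyperbolic measures on the Pesin blocks.

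For the upper bound in (1), fix $\gamma>0$ and the $\epsilon>0$ provided by $\psi\in\mathcal{V}^{+}$. Let $A_n=\{\tfrac{1}{n}S_n\varphi\geq c\}$, pick a maximal $(n,\epsilon)$-separated $E_n\subset A_n$, and cover $A_n$ by the Bowen balls $B_n(x,\epsilon)$, $x\in E_n$, to get $m(A_n)\leq C\sum_{x\in E_n}e^{-S_n\psi(x)}$. Cover the weak$^{*}$-compact set $\{\sigma\in\mathscr{M}(M,f):\int\varphi\,d\sigma\geq c\}$ by finitely many small neighborhoods $U_1,\dots,U_N$ of invariant measures $\nu_1,\dots,\nu_N$ on which $\int\psi\,d\sigma$ varies by at most $\gamma$, and split $E_n$ by which $U_i$ contains the empirical measure $\mu_x^n$. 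A standard Katok-type upper bound gives $\#\{x\in E_n:\mu_x^n\in U_i\}\leq e^{n(h_{\nu_i}(f)+\gamma)}$, while $S_n\psi(x)\geq n(\int\psi\,d\nu_i-\gamma)$ on that piece; summing and letting $\gamma\downarrow 0$ yields the desired bound.

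For (2), fix $\nu\in\mathscr{M}_{inv}(\widetilde{\Lambda}_\mu,f)$ with $\int\varphi\,d\nu>c$ and, using ergodic decomposition and the affineness of $\sigma\mapsto h_\sigma(f)-\int\psi\,d\sigma$, approximate $\nu$ by a convex combination $\sum_{i=1}^{k}\alpha_i\omega_i$ of ergodic $\omega_i\in\mathscr{M}_{erg}(\widetilde{\Lambda}_\mu,f)$ with $\sum\alpha_i\int\varphi\,d\omega_i>c$ and $\sum\alpha_i(h_{\omega_i}-\int\psi\,d\omega_i)>h_\nu(f)-\int\psi\,d\nu-\gamma$. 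For each $\omega_i$ select a Pesin block $\widetilde{\Lambda}_{l_i}$ with $\omega_i(\widetilde{\Lambda}_{l_i})>1-\gamma$; Katok's entropy formula, combined with Birkhoff's theorem, then produces, for $n_i\approx\alpha_i n$ large, an $(n_i,4\eta\epsilon_{l_i})$-separated set $F^{i}_{n_i}\subset\widetilde{\Lambda}_{l_i}$ of cardinality $\geq e^{n_i(h_{\omega_i}-\gamma)}$, all of whose elements have Birkhoff averages of $\varphi$ and $\psi$ within $\gamma$ of the $\omega_i$-integrals. For each tuple $\mathbf{x}=(x_1,\dots,x_k)\in\prod_iF^{i}_{n_i}$, concatenate the orbit segments $\{f^j(x_i)\}_{0\leq j<n_i}$, inserting short buffers to enforce $|s_n-s_{n-1}|\leq 1$, to form a $\{\delta_k\}$-pseudo-orbit, and invoke the weak shadowing lemma to obtain a unique $\eta$-shadowing point $y=y(\mathbf{x})$. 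For $\eta$ small, the modulus of continuity of $\varphi$ and $\psi$ transfers averages: $\tfrac{1}{n}S_n\varphi(y)>c$ and $\tfrac{1}{n}S_n\psi(y)\leq\int\psi\,d\nu+O(\gamma)$. Uniqueness of shadowing plus the separation of the $F^{i}_{n_i}$'s guarantees that distinct tuples yield $(n,\epsilon')$-separated shadowing points for some $\epsilon'>0$. Choosing $\epsilon'$ small enough both to invoke $\psi\in\mathcal{V}^{-}$ and to force $B_n(y,\epsilon')\subset\{\tfrac{1}{n}S_n\varphi>c\}$, disjointness of the Bowen balls yields
\[ m\{\tfrac{1}{n}S_n\varphi>c\}\;\geq\;\Bigl(\prod_i|F^{i}_{n_i}|\Bigr)\,C\,e^{-n(\int\psi\,d\nu+O(\gamma))}\;\geq\;C\,e^{n(h_\nu(f)-\int\psi\,d\nu-O(\gamma))}, \]
and taking $\liminf\tfrac{1}{n}\log$ together with $\gamma\downarrow 0$ finishes the proof.

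\emph{Main obstacle.} The delicate step is the simultaneous calibration of the parameters $(\eta,\epsilon',\{l_i\},\{\delta_k\})$ so that the concatenated pseudo-orbits actually satisfy the hypotheses of the weak shadowing lemma, distinct tuples produce genuinely separated shadowing points (a pigeonhole estimate must show that any collision set is exponentially subleading), and the $\eta$-shadowing preserves the target Birkhoff averages to within~$\gamma$. Once the calibration is consistent, the final counting is purely arithmetic.
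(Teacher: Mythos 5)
Your proposal is correct and follows essentially the same route as the paper: part (1) is the standard Young-type empirical-measure covering argument (which the paper simply cites from Young's 1990 paper), and part (2) proceeds exactly as the paper does, via ergodic approximation of $\nu$, Katok's entropy formula to produce $(n_i,4\epsilon')$-separated sets inside Pesin blocks, concatenation into a $\{\delta_k\}$-pseudo-orbit with bridge segments coming from ergodicity, the weak shadowing lemma, and a final count of disjoint Bowen balls. Two small details to fix when filling this in: use a single block level $l$ (take the maximum of your $l_i$'s) so that the pseudo-orbit constraint $|s_n-s_{n-1}|\leq 1$ is easy to arrange, and note that disjointness of the Bowen balls $B_{Y_n}(z,\epsilon')$ around distinct shadowing points comes from the $4\epsilon'$-separation of the $W_i$'s exceeding twice the shadowing radius $\epsilon'$, not from uniqueness of the shadowing point.
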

\begin{cro}\label{cro2.1}
Let $f:M\to M$ be a $C^{1+\alpha}$ diffeomorphism of a compact Riemannian manifold
and let $\omega\in\mathscr M_{\rm erg}(M,f)$ be a hyperbolic measure. If
$\beta_1\leq\lambda^-(\omega)$ and $\beta_2\leq\lambda^+(\omega)$,
then for every $\varphi\in C^0(M)$  and $c\in \mathbb{R}$, the following hold:
\begin{enumerate}
\item[(1)] For $\psi\in \mathcal{V}^+$, we have
\begin{align*}
\overline{R}(\varphi,[c,\infty))
\leq\sup\left\{h_\nu(f)-\int\psi d\nu:\nu\in \mathscr M_{\rm inv}(M,f),\int\varphi d\nu\geq c\right\}.
\end{align*}
\item[(2)] For $\psi\in \mathcal{V}^-$, we have
\begin{align*}
\underline{R}(\varphi,(c,\infty))
\geq\sup\left\{h_\nu(f)-\int\psi d\nu:\nu\in \mathscr{M}_{inv}(\widetilde{\Lambda}_\omega,f),\int\varphi d\nu> c\right\},
\end{align*}
\end{enumerate}
where $\widetilde{\Lambda}_\omega=\bigcup_{l\geq1}{\rm supp}(\omega|\Lambda_l(\beta_1,\beta_2,\epsilon))$.
\end{cro}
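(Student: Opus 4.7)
The plan is to deduce Corollary~\ref{cro2.1} as a direct specialization of Theorem~\ref{thm2.2}; the only nontrivial ingredient is verifying that the hyperbolicity assumption on $\omega$, together with the inequalities $\beta_1\leq\lambda^-(\omega)$ and $\beta_2\leq\lambda^+(\omega)$, forces the Pesin set $\Lambda(\beta_1,\beta_2,\epsilon)$ to be non-empty, so that Theorem~\ref{thm2.2} can be invoked.

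First I would check the ``$\Lambda$ non-empty'' hypothesis of Theorem~\ref{thm2.2}. Since $\omega$ is an ergodic hyperbolic measure, Oseledets' theorem yields Lyapunov exponents $\lambda_1(\omega)>\cdots>\lambda_{\phi(\omega)}(\omega)$, all non-zero, at $\omega$-a.e.\ regular point $x$, with a corresponding invariant splitting $T_xM=E^s_x\oplus E^u_x$ grouping the eigenspaces associated with the negative and positive exponents. The inequalities $\beta_1\leq\lambda^-(\omega)$ and $\beta_2\leq\lambda^+(\omega)$ ensure that at each such regular point the contraction/expansion and angle estimates in the definition of $\Lambda_k$ can be met for some sufficiently large level $k=k(x)$; hence the $\omega$-typical orbit lies in $\Lambda=\bigcup_k\Lambda_k$, and in fact $\omega(\Lambda)=1$. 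This is precisely the remark already recorded in the excerpt, namely $\omega\in\mathscr{M}_{\rm inv}(\widetilde{\Lambda}_\omega,f)$, which simultaneously tells us that $\widetilde{\Lambda}_\omega$ is non-empty and that $\omega$ itself may serve as an ergodic measure supported there.

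Given this, part~(1) of the corollary is immediate: it is the statement of Theorem~\ref{thm2.2}(1) verbatim. For part~(2), I would apply Theorem~\ref{thm2.2}(2) with the specific choice $\mu:=\omega$; since $\omega$ is ergodic, this is a legitimate choice, and the set $\widetilde{\Lambda}_\mu$ appearing on the right-hand side of the conclusion of Theorem~\ref{thm2.2}(2) then becomes exactly $\widetilde{\Lambda}_\omega=\bigcup_{l\geq1}\mathrm{supp}(\omega|\Lambda_l(\beta_1,\beta_2,\epsilon))$, which matches the statement to be proved.

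Since the whole argument reduces to verifying compatibility of hypotheses and specializing $\mu=\omega$, I expect no genuine obstacle; the corollary is really a convenient reformulation of Theorem~\ref{thm2.2} in the common setting where the Pesin structure is generated by a given hyperbolic ergodic measure. The only care needed lies in the parameter choice (the constraint $\beta_1,\beta_2\gg\epsilon$ must coexist with $\beta_1\leq\lambda^-(\omega)$ and $\beta_2\leq\lambda^+(\omega)$), but this is automatic under the corollary's hypotheses since $\epsilon$ can be taken as small as we wish relative to the fixed positive quantities $\lambda^{\pm}(\omega)$.
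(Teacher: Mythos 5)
Your proposal is correct and matches the paper's (implicit) reasoning: the paper gives no separate proof of the corollary precisely because it is the direct specialization of Theorem~\ref{thm2.2} with $\mu=\omega$, using the remark already recorded in Section~2 that $\omega\in\mathscr{M}_{\rm inv}(\widetilde{\Lambda}_\omega,f)$ whenever $\omega$ is ergodic hyperbolic with $\beta_1\leq\lambda^-(\omega)$ and $\beta_2\leq\lambda^+(\omega)$. Your additional care about the parameter constraint $\beta_1,\beta_2\gg\epsilon$ is a sensible observation, but as you note it imposes no real restriction since $\epsilon$ may be chosen arbitrarily small.
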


\section{Proof of Main Theorem}
In this section, we will verify theorem \ref{thm2.2}. The upper bound for the measure
of deviation holds for all topological dynamical systems. By \cite{You}, for $\psi\in \mathcal{V}^+$ we have
\begin{align*}
\overline{R}(\varphi,[c,\infty))
\leq\sup\left\{h_\nu(f)-\int\psi d\nu:\nu\in \mathscr M_{\rm inv}(M,f),\int\varphi d\nu\geq c\right\}.
\end{align*}
To obtain the lower bound estimate we need to construct a suitable pseudo-orbit. Our method is inspired by
\cite{LiaLiaSUnTia} and \cite{You}.
Firstly, we give some important lemmas as follows.
\begin{lem}\label{lem3.1}{\rm \cite{Kat2}}
Let $(X,d)$ be a compact metric space, $f:X\to X$ be a continuous map and $\mu$
be an ergodic invariant measure. For $\epsilon>0$, $\gamma\in (0,1)$ let $N^\mu(n,\epsilon,\gamma)$ denote the minimum
number of $\epsilon$-Bowen balls $B_n(x,\epsilon)$, which cover a set of $\mu$-measure at least $1-\gamma$. Then
\begin{align*}
h_\mu(f)=\lim_{\epsilon\to0}\liminf_{n\to\infty}\frac{1}{n}\log N^\mu(n,\epsilon,\gamma)=
\lim_{\epsilon\to0}\limsup_{n\to\infty}\frac{1}{n}\log N^\mu(n,\epsilon,\gamma).
\end{align*}
\end{lem}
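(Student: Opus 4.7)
The plan is to reduce the claim to the Brin--Katok local entropy formula, which for an ergodic measure $\mu$ asserts that for $\mu$-a.e.\ $x$,
$$h_\mu(f)\;=\;\lim_{\epsilon\to0}\limsup_{n\to\infty}\Bigl(-\tfrac{1}{n}\log \mu(B_n(x,\epsilon))\Bigr)\;=\;\lim_{\epsilon\to0}\liminf_{n\to\infty}\Bigl(-\tfrac{1}{n}\log \mu(B_n(x,\epsilon))\Bigr).$$
By Egorov's theorem, for fixed small $\delta>0$ and radius $\epsilon>0$ I can find a set $A=A(\epsilon,\delta)\subset M$ with $\mu(A)>1-\gamma/2$ and an integer $n_0$ such that whenever $x\in A$ and $n\ge n_0$,
$$e^{-n(h_\mu(f)+\delta)}\;\le\;\mu(B_n(x,\epsilon))\;\le\;e^{-n(h_\mu(f)-\delta)}.$$

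For the \textbf{upper bound} on $N^\mu(n,\epsilon,\gamma)$, I would select a maximal $(n,\epsilon)$-separated subset $\{x_1,\ldots,x_N\}$ of $A$. Maximality makes it $(n,\epsilon)$-spanning for $A$, so $\{B_n(x_i,\epsilon)\}$ covers a set of $\mu$-measure at least $1-\gamma/2\ge 1-\gamma$. On the other hand, the balls $\{B_n(x_i,\epsilon/2)\}$ are pairwise disjoint by the triangle inequality in the Bowen metric, and each has $\mu$-mass at least $e^{-n(h_\mu(f)+\delta')}$ by the Brin--Katok lower bound applied at radius $\epsilon/2$; summing over $i$ yields $N\le e^{n(h_\mu(f)+\delta')}$. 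Taking $\limsup$ in $n$ and then $\epsilon,\delta'\to 0$ produces the $\le h_\mu(f)$ half.

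For the \textbf{lower bound}, take any covering $\{B_n(x_i,\epsilon)\}_{i=1}^{N}$ of a set of $\mu$-measure $\ge 1-\gamma$. For each ball that meets $A$, choose $y_i\in B_n(x_i,\epsilon)\cap A$, so that $B_n(x_i,\epsilon)\subset B_n(y_i,2\epsilon)$. The balls $\{B_n(y_i,2\epsilon)\}$ then cover the set $A\cap\bigcup_i B_n(x_i,\epsilon)$, whose $\mu$-mass is at least $1-3\gamma/2$, and each such ball has $\mu$-mass at most $e^{-n(h_\mu(f)-\delta'')}$ by the Brin--Katok upper bound at radius $2\epsilon$. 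Hence $N\ge(1-3\gamma/2)\,e^{n(h_\mu(f)-\delta'')}$, and letting $n\to\infty$ then $\epsilon,\delta''\to 0$ gives the $\ge h_\mu(f)$ half.

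The main obstacle is really choosing a reference: this lemma is the classical theorem of Katok quoted from \cite{Kat2}, and for the present paper it is used as a black box. In a self-contained treatment the technical heart would be the Brin--Katok local entropy formula itself, whose proof rests on Shannon--McMillan--Breiman together with a careful comparison between elements of a refining partition and Bowen balls; once that input is in hand, the packing/covering argument sketched above is essentially bookkeeping with Egorov's theorem and the Bowen-metric triangle inequality.
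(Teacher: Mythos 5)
The paper quotes Lemma~\ref{lem3.1} as a known result with a citation to Katok's 1980 paper and offers no proof of its own, so there is no internal argument to compare against; your proposal is an independent reconstruction of the cited theorem.

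Your route is correct in outline and genuinely different from Katok's original argument. You reduce the claim to the Brin--Katok local entropy formula, apply Egorov to make the Bowen-ball measure estimates uniform on a set of nearly full measure, and then run a standard packing argument (disjoint $\epsilon/2$-balls centered at a maximal $(n,\epsilon)$-separated subset of $A$) for the upper bound on $N^\mu$, together with a covering argument (enlarging each ball that meets $A$ to a $2\epsilon$-ball centered inside $A$) for the lower bound. Katok's original proof predates the Brin--Katok theorem and instead compares Bowen balls directly to cylinders of a finite measurable partition via the Shannon--McMillan--Breiman theorem; the underlying analytic input is the same, but your version factors the SMB-to-Bowen-ball comparison through the local entropy formula, which makes the final packing/covering bookkeeping cleaner at the cost of importing a heavier black box.

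Two points you should tighten. First, the two-sided estimate $e^{-n(h_\mu(f)+\delta)}\leq\mu(B_n(x,\epsilon))\leq e^{-n(h_\mu(f)-\delta)}$ cannot hold at an arbitrary radius $\epsilon$: the right-hand inequality requires $\epsilon$ already small, since $\liminf_n\bigl(-\tfrac1n\log\mu(B_n(x,\epsilon))\bigr)$ approaches $h_\mu(f)$ only as $\epsilon\to0$, whereas the left-hand inequality holds for every $\epsilon$; you also invoke the set $A$ at three different radii ($\epsilon$, $\epsilon/2$, $2\epsilon$), so you should either build $A$ at the smallest radius you later use or note the monotonicity of the Bowen-ball measures in $\epsilon$. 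Second, the bound $\mu\bigl(A\cap\bigcup_i B_n(x_i,\epsilon)\bigr)\geq 1-3\gamma/2$ is positive only for $\gamma<2/3$; since the lemma is stated for all $\gamma\in(0,1)$, you should instead take $\mu(A)>1-(1-\gamma)/2$, which gives the nonvacuous bound $\mu(A\cap S)\geq(1-\gamma)/2>0$. Both are bookkeeping adjustments rather than gaps in the idea.
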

\begin{lem}{\rm \cite{Boc}}\label{lem3.2}
Let $f:M\to M$ be a $C^{1}$ diffeomorphism of a compact Riemannian manifold and $\mu\in\mathscr M_{\rm inv}(M,f)$.
Let $\Gamma\subseteq M$ be a measurable set with $\mu(\Gamma)>0$ and let
\begin{align*}
\Omega=\bigcup_{n\in \mathbb{Z}}f^n(\Gamma).
\end{align*}
Take $\gamma>0$. Then there exists a measurable function $N_0:\Omega\to \mathbb{N}$ such that for a.e.$x\in\Omega$
and every $t\in[0,1]$ there is some $l\in\{0,1,\cdots,n\}$ such that $f^l(x)\in\Gamma$ and
$\left|(l/n)-t\right|<\gamma$.
\end{lem}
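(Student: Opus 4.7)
The plan is to derive the statement from the Birkhoff ergodic theorem applied to the indicator $\chi_\Gamma$. Writing $S_n\chi_\Gamma(x)=\sum_{i=0}^{n-1}\chi_\Gamma(f^i x)$ for the number of visits to $\Gamma$ up to time $n$, Birkhoff provides an $f$-invariant, integrable function $\tilde\chi$ with $S_n\chi_\Gamma(x)/n\to\tilde\chi(x)$ for $\mu$-a.e.\ $x$ and $\int\tilde\chi\,d\mu=\mu(\Gamma)$. The argument will then have two ingredients: (i) $\tilde\chi>0$ almost everywhere on $\Omega$; and (ii) positivity of $\tilde\chi$, together with a quantitative Birkhoff rate, forces visits to $\Gamma$ to occur in every window of length $\gamma n$ inside $[0,n]$, which is exactly the content of the lemma (interpreted with the natural implicit quantifier ``for every $n\ge N_0(x)$'').

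For (i), I set $A=\{\tilde\chi=0\}$. Invariance of $\tilde\chi$ gives $\mu(A\cap\Gamma)=\int_A\chi_\Gamma\,d\mu=\int_A\tilde\chi\,d\mu=0$, and since $f$ is a $\mu$-preserving bijection one obtains $\mu(A\cap f^n\Gamma)=\mu(f^{-n}A\cap\Gamma)=\mu(A\cap\Gamma)=0$ for every $n\in\mathbb Z$; summing over $n$ yields $\mu(A\cap\Omega)=0$, as required.

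For (ii), I fix $x\in\Omega$ with $\alpha:=\tilde\chi(x)>0$, choose $N_1(x)$ so that $|S_m\chi_\Gamma(x)/m-\alpha|<\gamma\alpha/4$ for all $m\ge N_1(x)$, and set $N_0(x):=\lceil 4N_1(x)/(3\gamma\alpha)\rceil$. Both functions are measurable on $\Omega$ because they are defined by thresholds on the measurable family $\{S_m\chi_\Gamma\}_{m}$ and on $\tilde\chi$. Given $n\ge N_0(x)$ and $t\in[0,1]$, the window $(tn-\gamma n,tn+\gamma n)\cap[0,n]$ has length at least $\gamma n$, so it suffices to show that every integer subinterval $[a,b)\subseteq[0,n]$ with $b-a\ge\gamma n$ satisfies $S_b\chi_\Gamma(x)-S_a\chi_\Gamma(x)>0$. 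A case split on whether $a,b$ exceed $N_1(x)$ finishes the job: when both do, $S_b-S_a\ge(b-a)\alpha-(a+b)\gamma\alpha/4\ge\gamma\alpha n/2>0$; when $a<N_1(x)\le b$, the bound $S_a\le a<N_1(x)$ combined with $S_b\ge 3b\alpha/4\ge 3\gamma\alpha n/4$ and the choice of $N_0(x)$ gives the positivity; and $a,b<N_1(x)$ is ruled out by $b-a\ge\gamma n\ge N_1(x)$, which is implied by $n\ge N_0(x)$.

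The main obstacle is this quantitative comparison in step (ii): Birkhoff's theorem controls averages over initial segments $[0,m]$, not over arbitrary subwindows, so one must extract the window averages by subtracting two initial-segment estimates; the subtraction introduces a boundary case when the left endpoint $a$ is smaller than the Birkhoff cut-off $N_1(x)$, which has to be handled by the crude bound $S_a\le a$ together with a sufficiently strong lower bound on $S_b$. Crucially, $\alpha(x)$ may be arbitrarily small on $\Omega$, so $N_0(x)$ depends on $x$ in an essential way via both $N_1(x)$ and $\alpha(x)$; measurability, by contrast, is automatic since every quantity entering the definition of $N_0$ is measurable, and the use of an invariant (rather than ergodic) $\mu$ causes no trouble because only the abstract Birkhoff convergence is needed, with positivity supplied by step (i).
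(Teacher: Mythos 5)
The paper does not prove Lemma~\ref{lem3.2}; it is quoted from \cite{Boc}, and the quantifier ``for every $n\ge N_0(x)$'' has evidently been dropped in transcription, which you correctly restored from the way the lemma is later used in the proof of Lemma~3.4. Your argument via the Birkhoff ergodic theorem for $\chi_\Gamma$ --- first showing the Birkhoff limit $\tilde\chi$ is positive $\mu$-a.e.\ on $\Omega$ by the invariance of $\{\tilde\chi=0\}$, then the two-endpoint quantitative estimate with a case split at the Birkhoff cut-off $N_1(x)$ --- is correct and is the standard route to this statement, essentially the same as Bochi's original proof. The one place that deserves a word of care is the passage from the real window $(tn-\gamma n,tn+\gamma n)\cap[0,n]$ of length $\ge\gamma n$ to an integer interval $[a,b)\subseteq\{0,\dots,n\}$ with $b-a\ge\gamma n$: in the interior case the integer interval a priori only satisfies $b-a\ge 2\gamma n-1$, but your choice $N_0(x)=\lceil 4N_1(x)/(3\gamma\alpha)\rceil$ already forces $n>1/\gamma$ (since $\alpha\le 1$), which gives $2\gamma n-1\ge\gamma n$, so your constants go through as written.
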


Fix $\psi\in \mathcal{V}^-$. Pick an arbitrary
$\nu\in \mathscr{M}_{inv}(\widetilde{\Lambda},f)$ with $\int\varphi d\nu> c$. We
will prove that
\begin{align*}
\liminf_{n\to \infty}\frac{1}{n}\log\left\{x\in M:\frac{1}{n}S_n\varphi(x)\in E\right\}
\geq h_\nu(f)-\int\psi d\nu-10\gamma-\gamma h_{top}(f).
\end{align*}
for any preassigned $0<\gamma<1$. Let $\delta=\frac{1}{5}(\int\varphi d\nu-c)$.
\begin{lem}\label{lem3.3}
For $\gamma>0,\delta>0$ and $\nu\in\mathscr{M}_{inv}(\widetilde{\Lambda},f)$
there exists a finite convex combination of ergodic probability measures with
rational coefficients $\lambda=\sum\limits_{i=1}^{k}a_{i}\mu_{i}$ such that
\begin{align*}
\mu_{i}(\widetilde{\Lambda})=1,
\int\varphi d\lambda\geq c+4\delta,\int\psi d\nu\geq\int\psi d\lambda-\gamma \text{ and } h_\lambda(f)\geq h_\nu(f)-\gamma.
\end{align*}
\end{lem}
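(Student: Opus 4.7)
The plan is to combine the ergodic decomposition of $\nu$ with a finite, simple-function type approximation, followed by a rational rounding of the coefficients. Apply the ergodic decomposition theorem to write
\[
\nu=\int_{\mathscr{M}_{\mathrm{erg}}(M,f)}\eta\,d\tau(\eta).
\]
First I would check that $\widetilde{\Lambda}$ is an $f$-invariant Borel set: it is $F_\sigma$ (a countable union of closed supports), and the inclusion $f(\widetilde{\Lambda}_l)\subseteq\widetilde{\Lambda}_{l+1}$ follows from $f(\Lambda_l)\subseteq\Lambda_{l+1}$ together with the $f$-invariance of $\mu$. Hence $\nu(\widetilde{\Lambda})=1$ forces $\eta(\widetilde{\Lambda})=1$ for $\tau$-a.e.\ $\eta$. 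The affinity of entropy under ergodic decomposition and the linearity of integration yield
\[
h_\nu(f)=\int h_\eta(f)\,d\tau(\eta),\qquad \int\varphi\,d\nu=\int\!\!\int\varphi\,d\eta\,d\tau(\eta),
\]
and likewise for $\psi$; these are the three quantities that must be (nearly) preserved.

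Next I would convert this integral representation into a finite convex combination adapted simultaneously to the three relevant quantities. Consider the Borel map
\[
\Phi:\mathscr{M}_{\mathrm{erg}}(M,f)\to\mathbb{R}^{3},\qquad \Phi(\eta)=\Bigl(\int\varphi\,d\eta,\;\int\psi\,d\eta,\;h_\eta(f)\Bigr),
\]
whose first two coordinates are weak-$*$ continuous and whose third coordinate is Borel measurable and bounded by $h_{\mathrm{top}}(f)<\infty$. For a small parameter $\epsilon>0$ to be fixed later, partition the bounded image of $\Phi$ into finitely many cubes of diameter less than $\epsilon$, pull back to Borel pieces $P_1,\dots,P_N$, discard those of $\tau$-measure zero, intersect the remaining ones with the $\tau$-conull set $\{\eta:\eta(\widetilde{\Lambda})=1\}$, and pick a representative $\mu_j$ from each with weight $a_j=\tau(P_j)$. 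Using the affinity of entropy on convex combinations of invariant measures, the measure $\lambda_{0}=\sum_{j}a_{j}\mu_{j}$ satisfies
\[
\Bigl|\int\varphi\,d\lambda_{0}-\int\varphi\,d\nu\Bigr|<\epsilon,\quad \Bigl|\int\psi\,d\lambda_{0}-\int\psi\,d\nu\Bigr|<\epsilon,\quad |h_{\lambda_{0}}(f)-h_\nu(f)|<\epsilon.
\]

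Finally, to obtain rational coefficients I would choose $q_{j}\in\mathbb{Q}_{+}$ with $\sum q_{j}=1$ and $\sum|q_{j}-a_{j}|<\epsilon$, and set $\lambda=\sum q_{j}\mu_{j}$; the three target quantities then move by at most $\epsilon\bigl(\|\varphi\|_\infty+\|\psi\|_\infty+h_{\mathrm{top}}(f)\bigr)$. Taking $\epsilon$ small enough in terms of $\gamma$, $\delta$, $\|\varphi\|_\infty$, $\|\psi\|_\infty$ and $h_{\mathrm{top}}(f)$, and using $\int\varphi\,d\nu=c+5\delta$, the measure $\lambda$ meets all four requirements. The main obstacle I foresee is that $\eta\mapsto h_\eta(f)$ is not weak-$*$ continuous in general, so a naive metric partition of $\mathscr{M}_{\mathrm{erg}}(M,f)$ could not control the entropy; the remedy is to build entropy directly into $\Phi$, which is legitimate because $h_\cdot(f)$ is Borel and bounded, while the finite-sum affinity $h_{\sum q_{j}\mu_{j}}(f)=\sum q_{j}h_{\mu_{j}}(f)$ ensures that the entropy of the constructed combination matches the simple-function approximation to $\int h_\eta(f)\,d\tau(\eta)$.
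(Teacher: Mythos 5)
Your proof is correct, and it differs from the paper's in the one place where a genuine idea is needed. The paper also starts from the ergodic decomposition $\nu=\int m\,d\tau(m)$ and ends with a rational rounding of the weights, so the outer architecture is the same. The difference is in how the finite partition of $\mathscr{M}_{\mathrm{erg}}(\widetilde{\Lambda},f)$ is built and how representatives are chosen. The paper partitions $\mathscr{M}_{\mathrm{erg}}(\widetilde{\Lambda},f)$ into pieces $P_i$ of small diameter with respect to the weak-$*$ metric $D$; this controls $\int\varphi\,d\eta$ and $\int\psi\,d\eta$ on each piece but gives no control on entropy, and the entropy inequality is then recovered by choosing a representative $\mu_i\in P_i$ with $h_{\mu_i}(f)\geq h_m(f)-\gamma$ for $\tau$-a.e.\ $m\in P_i$ (an essential-supremum argument, valid since $h_\cdot(f)$ is bounded). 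You instead partition by the Borel map $\Phi(\eta)=\bigl(\int\varphi\,d\eta,\int\psi\,d\eta,h_\eta(f)\bigr)$ so that all three quantities are nearly constant on each piece, after which any representative will do. Your route is more symmetric and in fact yields a two-sided entropy estimate $|h_{\lambda_0}(f)-h_\nu(f)|<\epsilon$, whereas the paper's choice of near-maximal representatives only gives a one-sided bound; the lemma only needs the one-sided bound, so both are adequate. Your explicit remark that $h_\cdot(f)$ is not weak-$*$ continuous, and that folding it into $\Phi$ is the fix, is exactly the point the paper handles implicitly with its choice of representatives. The auxiliary observations you make — that $\widetilde{\Lambda}$ is $f$-invariant and $F_\sigma$, hence $\tau$-a.e.\ ergodic component gives full measure to $\widetilde{\Lambda}$, that entropy is Borel and bounded by $h_{\mathrm{top}}(f)<\infty$, and that the rational rounding perturbs the three quantities by at most $\epsilon(\|\varphi\|_\infty+\|\psi\|_\infty+h_{\mathrm{top}}(f))$ — are all correct and fill in details the paper leaves implicit.
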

\begin{proof}
We choose $\epsilon_1>0$ sufficiently small such that for any
$\mu_1,\mu_2\in\mathscr M_{\rm inv}(M,f)$ with $D(\mu_1,\mu_2)<\epsilon_1$
we have
\begin{align*}
\left|\int\varphi d\mu_1-\int\varphi d\mu_2\right|<\delta,
\left|\int\psi d\mu_1-\int\psi d\mu_2\right|<\gamma.
\end{align*}
By ergodic decomposition theorem,
we can write $\nu=\int_{\mathscr{M}_{erg}(\widetilde{\Lambda},f)}md\tau(m)$.
Let $\mathscr{P}=\{P_1,P_2,\cdots,P_k\}$ be a partition of $\mathscr{M}_{erg}(\widetilde{\Lambda},f)$ with
diam$\mathscr{P}<\epsilon_1$. Let $a_i=\tau(P_i)$. For every $1\leq i\leq k$ we can choose an
ergodic measure $\mu_i\in P_i$ satisfying $h_{\mu_i}(f)\geq h_m(f)-\gamma$ for $\tau$-almost every $m\in P_i$.
Let $\lambda=\sum\limits_{i=1}^{k}a_{i}\mu_{i}$. Obviously $\mu_i(\widetilde{\Lambda})=1,1\leq i\leq k$.
By ergodic decomposition theorem one can easily prove the remaining inqualities. Since the $a_i$'s can be
approximated by rational numbers, the desired results follows.
\end{proof}
Since $\mu_i(\widetilde{\Lambda})=1$, we can choose $l\in \mathbb{N}$ such that $\mu_i(\widetilde{\Lambda}_l)>1-\gamma$
for all $1\leq i\leq k$.
By lemma \ref{lem3.1}, we can choose $\epsilon'$ sufficiently small so
\begin{align}\label{equ3.1}
d(x,y)<\epsilon'\Rightarrow\left|\varphi(x)-\varphi(y)\right|<\delta,\left|\psi(x)-\psi(y)\right|<\gamma
\end{align}
and
\begin{align*}
\liminf_{n\to\infty}\frac{1}{n}\log N^{\mu_{i}}(n,4\epsilon',\gamma)>h_{\mu_{i}}(f)-\gamma
\end{align*}
for $1\leq i\leq k$. Let $\eta=\frac{\epsilon'}{\epsilon_0},$
it follows from weak shadowing lemma that there is a sequence of numbers $\{\delta_k\}.$
Let $\xi$ be a finite partition of $X$ with diam$(\xi)<\frac{\delta_{l}}{3}$
and $\xi\geq\{\widetilde{\Lambda}_{l}, M\setminus \widetilde{\Lambda}_{l} \}$.
For $1\leq i\leq k, n\in\mathbb{N},$ we consider the set
\begin{align*}
\Lambda_1^n(\mu_{i})=\left\{x\in\widetilde{\Lambda}_{l}:f^q(x)\in\xi(x){\rm ~for~some~} q\in[n,(1+\gamma)n]\right\}
\end{align*}
and
\begin{equation}\label{equ3.2}
\begin{split}
\Lambda_2^n(\mu_{i})
=\bigg\{x\in\widetilde{\Lambda}_{l}:&S_m\psi(x)\leq m(\int \psi d\mu_i+\gamma) \text{ and }\\
&S_m\varphi(x)\geq m(\int \varphi d\mu_{i}-\delta){\rm~for~all~} m\geq n\bigg\},
\end{split}
\end{equation}
where $\xi(x)$ is the element in $\xi$ containing $x.$  Let $\Lambda^n(\mu_{i})=\Lambda_1^n(\mu_{i})\cap \Lambda_2^n(\mu_{i})$.
Then we have the following lemma.
\begin{lem}
$\lim\limits_{n\to\infty}\mu_{i}(\Lambda^n(\mu_{i}))=\mu_{i}(\widetilde{\Lambda}_{l}),1\leq i\leq k.$
\end{lem}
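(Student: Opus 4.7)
My plan is to control $\Lambda_1^n(\mu_i)$ and $\Lambda_2^n(\mu_i)$ separately, show that each has $\mu_i$-measure converging upward to $\mu_i(\widetilde{\Lambda}_l)$, and then combine by elementary inclusion--exclusion inside $\widetilde{\Lambda}_l$.

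For $\Lambda_2^n(\mu_i)$ I would invoke the Birkhoff ergodic theorem for the ergodic measure $\mu_i$ and the observables $\varphi$ and $\psi$. This produces, for $\mu_i$-a.e.\ $x$, some $N_1(x)\in\mathbb{N}$ such that $S_m\varphi(x)\ge m(\int\varphi\,d\mu_i-\delta)$ and $S_m\psi(x)\le m(\int\psi\,d\mu_i+\gamma)$ simultaneously for every $m\ge N_1(x)$, so that $x\in\Lambda_2^n(\mu_i)$ as soon as $n\ge N_1(x)$ and $x\in\widetilde{\Lambda}_l$. The sublevel sets $\{N_1\le n\}\cap\widetilde{\Lambda}_l$ increase to a subset of $\widetilde{\Lambda}_l$ of full $\mu_i$-measure, so $\mu_i(\Lambda_2^n(\mu_i))\to\mu_i(\widetilde{\Lambda}_l)$.

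For $\Lambda_1^n(\mu_i)$ I would apply Lemma \ref{lem3.2} atom by atom. Since $\xi$ refines $\{\widetilde{\Lambda}_l,M\setminus\widetilde{\Lambda}_l\}$, let $A_1,\dots,A_s$ be the atoms of $\xi$ contained in $\widetilde{\Lambda}_l$. Fix an auxiliary parameter $\gamma'\in(0,\gamma/(2(1+\gamma)))$ and, for each $j$ with $\mu_i(A_j)>0$, apply Lemma \ref{lem3.2} with $\Gamma=A_j$ and parameter $\gamma'$ to obtain a measurable $N_0^{(j)}$ whose sublevel sets exhaust $A_j$ modulo a $\mu_i$-null set. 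For $x$ in this good part of $A_j$ and $n$ large, I would pick $m=\lfloor(1+\gamma)n\rfloor$ and choose $t\in[0,1]$ so that the interval $(t-\gamma',t+\gamma')$ lies inside $[n/m,1]$; this is possible because $1-n/m$ tends to $\gamma/(1+\gamma)>2\gamma'$. The $l\in\{0,\dots,m\}$ supplied by Lemma \ref{lem3.2} then satisfies $l\in[n,(1+\gamma)n]$ and $f^l(x)\in A_j=\xi(x)$, placing $x$ in $\Lambda_1^n(\mu_i)$. Combining the finitely many null exceptions over $j$ yields a single function $N_2$ on $\widetilde{\Lambda}_l$, and hence $\mu_i(\Lambda_1^n(\mu_i))\to\mu_i(\widetilde{\Lambda}_l)$.

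To finish, I would combine the two estimates using $\mu_i(\Lambda_1^n\cap\Lambda_2^n)\ge\mu_i(\Lambda_1^n)+\mu_i(\Lambda_2^n)-\mu_i(\widetilde{\Lambda}_l)$, valid because both sets sit inside $\widetilde{\Lambda}_l$. The only step that is not purely routine is the window-translation from Lemma \ref{lem3.2}, whose conclusion is phrased in terms of an arbitrary fraction $t\in[0,1]$, to the specific return window $[n,(1+\gamma)n]$; this is entirely arithmetic once $\gamma'$ and $t$ are tuned as above, so I do not anticipate any real obstacle.
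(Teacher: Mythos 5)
Your argument is correct and follows the same route as the paper: Birkhoff's theorem disposes of $\Lambda_2^n(\mu_i)$, and $\Lambda_1^n(\mu_i)$ is handled by applying Lemma \ref{lem3.2} atom by atom with $\Gamma$ an element of $\xi$, then shifting the $[0,m]$ window into $[n,(1+\gamma)n]$. The only cosmetic difference is that the paper takes $t=1$ directly (with the window half-width $\gamma/(1+\gamma)$ and the observation that the returned $q$ is automatically $\le m$), whereas you introduce an auxiliary $\gamma'$ and push $t$ strictly into the interior; both arithmetic tunings yield the same conclusion.
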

\begin{proof}
By Birkhoff ergodic theorem, it suffices to prove that
\begin{align*}
\lim_{n\to\infty}\mu_{i}(\Lambda_1^n(\mu_{i}))=\mu_{i}(\widetilde{\Lambda}_{l}),1\leq i\leq k.
\end{align*}
Let $[(1+\gamma)n]$ be the integer part of $(1+\gamma)n$. Take an element $\xi_k$ of $\xi$ and let $\Gamma=\xi_k$.
Applying lemma \ref{lem3.2}, we can deduce that for $a.e.x\in \xi_k$, there exists a measurable function $N_0$ such that for every
$[(1+\gamma)n]\geq N_0(x)$ there is some $q\in\{0,1,\cdots,[(1+\gamma)n]\}$ such that $f^q(x)\in \xi_k$ and
\begin{align*}
\frac{q}{[(1+\gamma)n]}-1>-\frac{\gamma}{1+\gamma}.
\end{align*}
We obtain $q\geq n-\frac{1}{1+\gamma}$. Since $q\in \mathbb{N}$, we have $q\in [n,(1+\gamma)n]$ and the proof is completed.
\end{proof}

By lemma \ref{lem3.1}, we can take sufficiently large $K\in\mathbb{N}$ such that
\begin{align*}
\mu_{i}(\Lambda^n(\mu_{i}))>1-\gamma
\end{align*}
for all $n\geq M$ and $1\leq i\leq k.$

Let
\begin{align*}
Q(\Lambda^n(\mu_{i}),4\epsilon')&=\inf\{\sharp S:S \text{ is } (n,4\epsilon') \text{ spanning set for } \Lambda^n(\mu_{i}) \},\\
P(\Lambda^n(\mu_{i}),4\epsilon')&=\sup\{\sharp S:S \text{ is } (n,4\epsilon') \text{ separated set for } \Lambda^n(\mu_{i}) \}.
\end{align*}
Then for all $n\geq K$ and $1\leq i\leq k$, we have
\begin{align*}
P(\Lambda^n(\mu_{i}),4\epsilon')\geq Q(\Lambda^n(\mu_{i}),4\epsilon')\geq N^{\mu_{i}}(n,\epsilon',\gamma).
\end{align*}
We obtain
\begin{align*}
\liminf_{n\to\infty}\frac{1}{n}\log P(\Lambda^n(\mu_{i}),4\epsilon')
\geq \liminf_{n\to\infty}\frac{1}{n}\log N^{\mu_{i}}(n,4\epsilon',\gamma)
>h_{\mu_{i}}(f)-\gamma.
\end{align*}
Thus we can choose $t\in \mathbb{N}$ large enough such that $\exp(t\gamma)>\sharp \xi$
and
\begin{align*}
\frac{1}{t}\log P(\Lambda^{t}(\mu_{i}),4\epsilon')>h_{\mu_{i}}(f)-2\gamma.
\end{align*}
for $1\leq i\leq k$. Let $S_i$
be a $(t,4\epsilon')$-separated set for $\Lambda^{t}(\mu_{i})$ and
\begin{align*}
\# S_i\geq\exp\left(t(h_{\mu_{i}}(f)-3\gamma)\right).
\end{align*}
For each $q\in[t,(1+\gamma)t],$ let
\begin{align*}
V_q(\mu_i)=\{x\in S_i:f^q(x)\in\xi(x)\}
\end{align*}
and let $n_i$ be the value of $q$ which maximizes $\#V_q(\mu_i).$ Obviously,
$n_i\geq t\geq\frac{n_i}{1+\gamma}\geq n_i(1-\gamma).$
Since $\exp(t\gamma)\geq t\gamma+1,$ we have that
\begin{align*}
\#V_{n_i}(\mu_i)\geq\frac{\#S_i}{t\gamma+1}\geq \exp\left(t(h_{\mu_{i}}(f)-4\gamma)\right).
\end{align*}
Consider the element $A_{i}\in\xi$ such that $\#(V_{n_i}(\mu_i)\cap A_{i})$ is maximal.
Let $W_{i}=V_{n_i}(\mu_i)\cap A_{i}$. It follows that
\begin{align*}
\#W_{i}\geq \frac{1}{\#\xi}\#V_{n_i}(\mu_i)\geq \frac{1}{\#\xi}\exp\left(t(h_{\mu_{i}}(f)-4\gamma)\right).
\end{align*}
Since $e^{t\gamma}>\sharp \xi$, $t\geq n_i(1-\gamma)$ and $\#W_i\geq1$, we have
\begin{align*}
\#W_{i}\geq \exp\left(n_i(1-\gamma)(h_{\mu_{i}}(f)-5\gamma)\right).
\end{align*}
Notice that $A_{i}$ is contained in an open subset $U_i$ with  diam$(U_i)\leq3$diam$(\xi).$
By the ergodicity of $\mu,$ for any two measures $\mu_{i_1},\mu_{i_2}$,
there exists $s=s(\mu_{i_1},\mu_{i_2})\in\mathbb{N}$ and
$y=y(\mu_{i_1},\mu_{i_2})\in U_{i_1}\cap\widetilde{\Lambda}_{l}$  such that
$f^s(y)\in U_{i_2}\cap\widetilde{\Lambda}_{l}.$
Letting $C_{i}=\frac{a_{i}}{n_i},$ we can choose an integer $N$ large enough so that
$NC_{i}$, $1\leq i\leq k$ are integers and
\begin{align*}
N\gamma\geq \sum\limits_{i=1}^{k-1}s(\mu_{i},\mu_{i+1}).
\end{align*}
Let $X=\sum\limits_{i=1}^{k-1}s(\mu_{i},\mu_{i+1})$  and
\begin{align*}
Y_n=n\sum\limits_{i=1}^{k}Nn_iC_i+X=nN+X,n=1,2,\cdots.
\end{align*}
Then we have
\begin{align}\label{equ3.3}
\frac{X}{N}\leq\gamma,  \frac{N}{Y_n}\geq \frac{1}{n+\gamma},\frac{X}{Y_n}\leq \frac{\gamma}{n+\gamma}.
\end{align}
For simplicity of the statement, for $x\in M$ define segments of orbits
\begin{align*}
L_{i}(x)&:=(x,f(x),\cdots,f^{n_i-1}(x)), 1\leq i\leq k,\\
\widehat{L}_{i_1,i_2}(x)&:=(x,f(x),\cdots,f^{s(\mu_{i_1},\mu_{i_2})-1}(x)),1\leq i_1,i_2\leq k.
\end{align*}

\begin{figure}[!ht]
\centering
\includegraphics[scale=0.9]{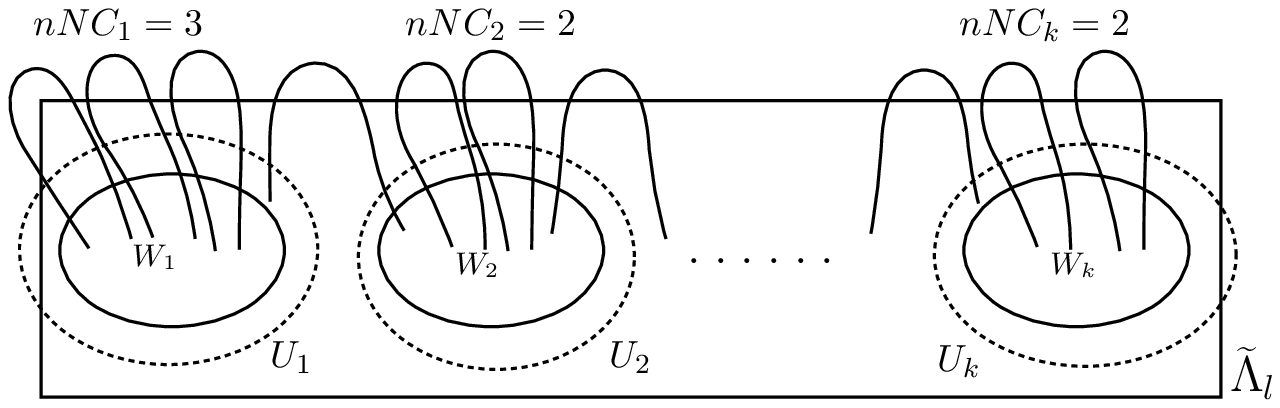}
\caption{Quasi-orbit}\label{figure1}
\end{figure}

For every $n\in \mathbb{N}$, consider the pseudo-orbit with finite length (see Figure \ref{figure1})
\begin{align*}
\Big\{ &L_{1}(x(1,1)),L_{1}(x(1,2)), \cdots,L_{1}(x(1,nNC_{1})),\widehat{L}_{1,2}(y(\mu_{1},\mu_{2}));\\
   &L_{2}(x(2,1)),L_{2}(x(2,2)), \cdots,L_{2}(x(2,nNC_{2})),\widehat{L}_{2,3}(y(\mu_{2},\mu_{3}));\\
   &\vdots\\
   &L_{k}(x(k,1)),L_{k}(x(k,2)), \cdots,L_{k}(x(k,nNC_{k}))
\Big\},
\end{align*}
where $x(i,j)\in W_{i},1\leq i\leq k,1\leq j\leq nNC_i.$

For $1\leq i\leq k$, $1\leq j\leq nNC_i$, let
\begin{align*}
M_1&=0,\\
M_i&=\sum_{l=1}^{i-1}(nNC_ln_l+s(\mu_l,\mu_{l+1})),\\
M_{i,j}&=M_i+(j-1)n_i.
\end{align*}
By weak shadowing lemma, there exists a shadowing $z$ for the above  pseudo-orbit satisfying
\begin{align*}
d(f^{M_{i,j}+q}(z),f^q(x(i,j)))\leq \eta\epsilon_0e^{-\epsilon l}
=\frac{\epsilon'}{\epsilon_0}\epsilon_0e^{-\epsilon l}\leq \epsilon'
\end{align*}
for $0\leq q\leq n_i-1$. The shadowing point $z$ can be considered as a map with the variables $x(i,j)$.
We define $F_n$ to be the image of the map. More precisely,
\begin{align*}
F_n=\Big\{z:z&=z(x(1,1),x(1,2), \cdots,x(1,nNC_{1}),
   x(2,1),x(2,2), \cdots,x(2,nNC_{2}),\cdots, \\
   &x(k,1),x(k,2), \cdots,x(k,nNC_{k})),x(i,j)\in W_i,1\leq i\leq k,1\leq j\leq nNC_i\Big\}.
\end{align*}
Since $W_{i}$ is $(n_i,4\epsilon')$ separated sets, for distinct $x(i,j),x'(i,j)\in W_{i}$
the corresponding shadowing points $z,z'\in F_n$ satisfying $B_{Y_n}(z,\epsilon')\cap B_{Y_n}(z',\epsilon')=\emptyset$.
Thus
\begin{equation}\label{equ3.4}
\begin{split}
\#F_n&\geq(\#W_1)^{nNC_1}(\#W_2)^{nNC_2}\cdots(\#W_k)^{nNC_k}\\
&\geq\exp\left\{\sum_{i=1}^k nNn_iC_i(1-\gamma)(h_{\mu_{i}}(f)-5\gamma)\right\}\\
&=\exp \left\{nN(1-\gamma)(h_\lambda(f)-5\gamma)\right\}.
\end{split}
\end{equation}
\begin{lem}\label{lem3.5}
For sufficiently large $n$, we have $B_{Y_n}(z,\epsilon')\subseteq \left\{x\in M:\frac{1}{Y_n}S_{Y_n}\varphi(x)>c\right\}$
for all $z\in F_n$.
\end{lem}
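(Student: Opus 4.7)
The plan is to start from an arbitrary $x \in B_{Y_n}(z,\epsilon')$ and estimate $S_{Y_n}\varphi(x)$ from below by three successive approximations, each justified by (\ref{equ3.1}) applied together with a different kind of $\epsilon'$-closeness.

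First I would use the Bowen-ball condition $d(f^{j}x, f^{j}z) < \epsilon'$ for $0 \leq j \leq Y_n-1$ together with (\ref{equ3.1}) to obtain
\begin{align*}
S_{Y_n}\varphi(x) \;\geq\; S_{Y_n}\varphi(z) - Y_n \delta.
\end{align*}
Then I would decompose $S_{Y_n}\varphi(z)$ according to the construction of the pseudo-orbit: the blocks of length $n_i$ starting at indices $M_{i,j}$, plus the short connecting blocks $\widehat{L}_{i,i+1}$ of total length $X$. The connecting blocks contribute an error of at most $X \cdot \|\varphi\|_\infty$, which by (\ref{equ3.3}) is $O(\gamma / (n+\gamma)) \cdot Y_n \|\varphi\|_\infty$. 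On each long block, the shadowing inequality $d(f^{M_{i,j}+q}z, f^q x(i,j)) \leq \epsilon'$ and (\ref{equ3.1}) give
\begin{align*}
\sum_{q=0}^{n_i-1}\varphi(f^{M_{i,j}+q}z) \;\geq\; S_{n_i}\varphi(x(i,j)) - n_i \delta.
\end{align*}

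Next, because $x(i,j) \in W_i \subseteq \Lambda^t(\mu_i) \subseteq \Lambda_2^t(\mu_i)$ and $n_i \geq t$, the defining inequality of $\Lambda_2^t(\mu_i)$ in (\ref{equ3.2}) yields $S_{n_i}\varphi(x(i,j)) \geq n_i(\int \varphi\, d\mu_i - \delta)$. Summing over $i$ and $j$ and using $\sum_i nNC_i n_i \int\varphi\,d\mu_i = nN \sum_i a_i \int \varphi\, d\mu_i = nN \int\varphi\,d\lambda$, together with $\int \varphi\, d\lambda \geq c + 4\delta$ from Lemma \ref{lem3.3} and the choice $\delta = \tfrac{1}{5}(\int\varphi\,d\nu - c)$, I obtain
\begin{align*}
S_{Y_n}\varphi(z) \;\geq\; nN(c + 4\delta) - 2nN\delta - X\|\varphi\|_\infty.
\end{align*}

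Combining with the first step and dividing by $Y_n = nN + X$, I would use the estimates $nN/Y_n \to 1$ and $X/Y_n \to 0$ from (\ref{equ3.3}) as $n \to \infty$ to conclude
\begin{align*}
\tfrac{1}{Y_n} S_{Y_n}\varphi(x) \;\geq\; (c + 2\delta)\tfrac{nN}{Y_n} - \delta - \tfrac{X}{Y_n}\|\varphi\|_\infty \;\longrightarrow\; c + \delta,
\end{align*}
which is strictly larger than $c$ for all sufficiently large $n$, uniformly in $z \in F_n$ and $x \in B_{Y_n}(z,\epsilon')$. The only place that requires care is bookkeeping the three sources of error (Bowen closeness for $x$ vs $z$, shadowing closeness for $z$ vs $x(i,j)$, and the short interpolation gaps of total length $X$) so that each is controlled by a separate quantity ($\delta$, $\delta$, and $\gamma/(n+\gamma)$ respectively); this is routine but is the only nontrivial step.
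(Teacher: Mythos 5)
Your proposal is correct and follows essentially the same route as the paper: reduce via (\ref{equ3.1}) to bounding $\tfrac{1}{Y_n}S_{Y_n}\varphi(z)$, decompose along the pseudo-orbit blocks, control the shadowing error by $\delta$ per term and the gluing gaps by $X\|\varphi\|$, invoke the $\Lambda_2$-condition (\ref{equ3.2}) with $n_i\geq t$, then use $a_i=C_in_i$, Lemma~\ref{lem3.3}, and (\ref{equ3.3}) to conclude. The bookkeeping of the three error sources matches the paper's computation (the paper reaches $c+\tfrac{3}{2}\delta$ for $z$ before the Bowen-ball subtraction, you reach $c+\delta$ for $x$ after it; same content).
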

\begin{proof}
From (\ref{equ3.1}), it suffices to prove that for sufficiently large $n$,
\begin{align*}
\frac{1}{Y_n}S_{Y_n}\varphi(z)>c+\delta.
\end{align*}
By (\ref{equ3.1}), (\ref{equ3.2}), $a_i=C_in_i$ and lemma \ref{lem3.3}, we have
\begin{align*}
\frac{1}{Y_n}S_{Y_n}\varphi(z)
&\geq \frac{1}{Y_n}\sum_{i=1}^{k}\sum_{j=1}^{nNC_i}S_{n_i}\varphi(x(i,j))-\frac{nN\delta}{Y_n}-\frac{X\|\varphi\|}{Y_n}\\
&\geq \frac{1}{Y_n}\sum_{i=1}^{k}nNC_in_i(\int\varphi d\mu_i-\delta)-\frac{nN\delta}{Y_n}-\frac{X\|\varphi\|}{Y_n}\\
&=     \frac{nN}{Y_n}(\int\varphi d\lambda-\delta)-\frac{nN\delta}{Y_n}-\frac{X\|\varphi\|}{Y_n}\\
&\geq  \frac{nN}{Y_n}(c+3\delta)-\frac{nN\delta}{Y_n}-\frac{X\|\varphi\|}{Y_n}.
\end{align*}
It follows from (\ref{equ3.3}) we obtain
\begin{align*}
\lim_{n\to\infty}\frac{nN}{Y_n}=1,\lim_{n\to\infty}\frac{X}{Y_n}=0.
\end{align*}
Thus for sufficiently large $n$, we have
\begin{align}\label{equ3.5}
\frac{1}{Y_n}S_{Y_n}\varphi(z)>c+\frac{3}{2}\delta.
\end{align}
Hence the desired result follows.
\end{proof}
\begin{lem}\label{lem3.6}
For sufficiently large $n$,
\begin{align*}
\frac{1}{Y_n}\log m\left\{x\in M:\frac{1}{Y_n}S_{Y_n}\varphi(x)>c\right\}\geq h_\nu(f)-\int \psi d\nu-10\gamma-\gamma h_{top}(f).
\end{align*}
\end{lem}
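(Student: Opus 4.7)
The plan is to convert the combinatorial lower bound on $\#F_n$ into a measure estimate by using disjointness of Bowen balls together with the hypothesis $\psi\in\mathcal{V}^-$. Concretely, Lemma \ref{lem3.5} shows that each Bowen ball $B_{Y_n}(z,\epsilon')$ (for $z\in F_n$) lies inside the level set we want to measure, and by construction of $F_n$ these Bowen balls are pairwise disjoint. Hence
\begin{align*}
m\left\{x\in M:\tfrac{1}{Y_n}S_{Y_n}\varphi(x)>c\right\}
\;\geq\; \sum_{z\in F_n} m B_{Y_n}(z,\epsilon')
\;\geq\; C\,\#F_n\cdot\min_{z\in F_n}e^{-S_{Y_n}\psi(z)},
\end{align*}
where the last inequality uses $\psi\in\mathcal{V}^-$ with the fixed scale $\epsilon'$ (which is chosen arbitrarily small, so this is legitimate).

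The core of the argument is then an upper estimate of $S_{Y_n}\psi(z)$ for $z\in F_n$. I would break the orbit $z,f(z),\dots,f^{Y_n-1}(z)$ into the blocks dictated by the pseudo-orbit: $nNC_i$ blocks of length $n_i$ shadowing each $x(i,j)$, plus transition pieces of total length $X$. On each block the shadowing estimate gives $d(f^{M_{i,j}+q}z,f^q x(i,j))\leq\epsilon'$, so (\ref{equ3.1}) upgrades to $|\psi(f^{M_{i,j}+q}z)-\psi(f^q x(i,j))|<\gamma$. Combining with the defining inequality of $\Lambda_2^n(\mu_i)$ in (\ref{equ3.2}) yields, per block,
\begin{align*}
\sum_{q=0}^{n_i-1}\psi(f^{M_{i,j}+q}z)\;\leq\; n_i\!\int\!\psi\,d\mu_i+2n_i\gamma,
\end{align*}
while the transition pieces contribute at most $X\|\psi\|$. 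Summing over all blocks and invoking $a_i=C_in_i$ together with Lemma \ref{lem3.3} gives
\begin{align*}
S_{Y_n}\psi(z)\;\leq\; nN\!\int\!\psi\,d\lambda+2nN\gamma+X\|\psi\|
\;\leq\; nN\!\int\!\psi\,d\nu+3nN\gamma+X\|\psi\|.
\end{align*}

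Finally I would plug in the cardinality bound (\ref{equ3.4}) and take $\frac{1}{Y_n}\log$:
\begin{align*}
\tfrac{1}{Y_n}\log m\{\cdots\}
\;\geq\;\tfrac{\log C}{Y_n}+\tfrac{nN}{Y_n}\Big[(1-\gamma)(h_\lambda(f)-5\gamma)-\!\int\!\psi\,d\nu-3\gamma\Big]-\tfrac{X\|\psi\|}{Y_n}.
\end{align*}
Using (\ref{equ3.3}), which yields $nN/Y_n\to 1$ and $X/Y_n\to 0$, and then applying $h_\lambda(f)\geq h_\nu(f)-\gamma$ from Lemma \ref{lem3.3} together with the trivial bound $h_\lambda(f)\leq h_{\mathrm{top}}(f)$ to absorb the $\gamma h_\lambda(f)$ term, I arrive at the claimed lower bound $h_\nu(f)-\int\psi\,d\nu-10\gamma-\gamma h_{\mathrm{top}}(f)$ for all sufficiently large $n$.

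The only real obstacle is bookkeeping: ensuring that all the $\gamma$-sized errors (from continuity, from the ergodic-average control on $\Lambda_2^n(\mu_i)$, from the $(1-\gamma)$ and $-5\gamma$ factors in (\ref{equ3.4}), from the ergodic-decomposition step in Lemma \ref{lem3.3}, and the transition loss $X\|\psi\|/Y_n$) are accounted for and absorbed into the advertised $10\gamma+\gamma h_{\mathrm{top}}(f)$ slack; once the $\psi$-estimate is in hand, the rest is essentially linear bookkeeping in $\gamma$.
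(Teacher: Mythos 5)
Your proposal is correct and follows essentially the same route as the paper: apply Lemma \ref{lem3.5} and disjointness of the Bowen balls, use $\psi\in\mathcal{V}^-$ to convert $mB_{Y_n}(z,\epsilon')$ into $C\exp(-S_{Y_n}\psi(z))$, then bound $S_{Y_n}\psi(z)$ block-by-block via the shadowing estimate, the continuity bound (\ref{equ3.1}), and the Birkhoff-sum control in $\Lambda_2^n(\mu_i)$, and finally combine with the cardinality bound (\ref{equ3.4}), Lemma \ref{lem3.3}, and $nN/Y_n\to1$, $X/Y_n\to0$. The only cosmetic difference is that you fold the $\int\psi\,d\lambda\to\int\psi\,d\nu$ conversion into the $\psi$-estimate (getting $3nN\gamma$) while the paper performs it at the end together with $h_\lambda\geq h_\nu-\gamma$; both yield the same $-10\gamma-\gamma h_{\mathrm{top}}(f)$ slack.
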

\begin{proof}
By lemma \ref{lem3.5}, we have
\begin{align*}
     &\frac{1}{Y_n}\log m\left\{x\in M:\frac{1}{Y_n}S_{Y_n}\varphi(x)>c\right\}\\
\geq &\frac{1}{Y_n}\log\sum_{z\in F_n}mB_{Y_n}(z,\epsilon')
\geq \frac{1}{Y_n}\log\sum_{z\in F_n}C\exp(-S_{Y_n}\psi(z))\\
=    &\frac{1}{Y_n}\log\sum_{z\in F_n}\exp(-S_{Y_n}\psi(z))+\frac{1}{Y_n}\log C.
\end{align*}
Applying (\ref{equ3.1}) and (\ref{equ3.2}), we have
\begin{align*}
S_{Y_n}\psi(z)<&\sum_{i=1}^{k}\sum_{j=1}^{nNC_i}S_{n_i}\psi(x(i,j))+nN\gamma+X\|\psi\|\\
\leq &\sum_{i=1}^{k}nNC_in_i(\int\psi d\mu_i+\gamma)+nN\gamma+X\|\psi\|\\
=    &nN\int\psi d\lambda+2nN\gamma+X\|\psi\|.
\end{align*}
It follows from (\ref{equ3.4}) lemma \ref{lem3.3} that
\begin{align*}
     &\frac{1}{Y_n}\log m\left\{x\in M:\frac{1}{Y_n}S_{Y_n}\varphi(x)>c\right\}\\
\geq &\frac{1}{Y_n}\log \sum_{z\in F_n}\exp\left\{-nN\int\psi d\lambda-2nN\gamma-X\|\psi\|\right\}+\frac{1}{Y_n}\log C\\
\geq &\frac{1}{Y_n}\log \exp\left\{nN(1-\gamma)(h_\lambda(f)-5\gamma)-nN\int\psi d\lambda-2nN\gamma-X\|\psi\|\right\}
        +\frac{1}{Y_n}\log C\\
=&\frac{1}{Y_n}\left\{nN(h_\lambda(f)-\int\psi d\lambda)+nN(5\gamma^2-7\gamma-\gamma h_\lambda(f))-X\|\psi\|\right\}
        +\frac{1}{Y_n}\log C\\
\geq&\frac{1}{Y_n}\left\{nN(h_\nu(f)-\int\psi d\nu)+nN(-9\gamma-\gamma h_\lambda(f))-X\|\psi\|\right\}+\frac{1}{Y_n}\log C.
\end{align*}
Together with
\begin{align*}
\lim_{n\to\infty}\frac{nN}{Y_n}=1,\lim_{n\to\infty}\frac{X}{Y_n}=0,
\end{align*}
for sufficiently large $n$, we have
\begin{align*}
\frac{1}{Y_n}\log m\left\{x\in M:\frac{1}{Y_n}S_{Y_n}\varphi(x)>c\right\}
\geq h_\nu(f)-\int\psi d\nu-10\gamma-\gamma h_{top}(f).
\end{align*}
\end{proof}
For any $n\in \mathbb{N}$, let $i_n$ be the unique natural number such that
\begin{align*}
Y_{i_n}\leq n<Y_{i_n+1}.
\end{align*}
By (\ref{equ3.5}),  for sufficiently large $n$  we have
\begin{align*}
\frac{1}{n}S_{n}\varphi(z)>c+\delta \text{ and }
B_{n}(z,\epsilon')\subseteq \left\{x\in M:\frac{1}{n}S_{n}\varphi(x)>c\right\}
\end{align*}
for all $z\in F_{i_n}$.
Combining this with previous lemma we obtain
\begin{align*}
     &\frac{1}{n}\log m\left\{x\in M:\frac{1}{n}S_{n}\varphi(x)>c\right\}\\
\geq &\frac{Y_{i_n}}{n}\cdot\frac{1}{Y_{i_n}}\log\sum_{z\in F_{i_n}} mB(z,\epsilon')\\
\geq &\frac{Y_{i_n}}{n}\left\{h_\nu(f)-\int\psi d\nu-10\gamma-\gamma h_{top}(f)\right\}.
\end{align*}
for sufficiently large $n$. Since $\gamma$ was arbitrary, the proof of theorem \ref{thm2.2} is completed.

\section{Some Applications}
{\bf Example 1 Diffeomorphisms on surfaces}
Let $f:M\to M$ be a $C^{1+\alpha}$ diffeomorphism with $\dim M=2$ and
$h_{top}(f)>0$, then there exists a hyperbolic measure $m\in\mathscr M_{\rm erg}(M,f)$
with Lyapunov exponents $\lambda_1>0>\lambda_2$ (see \cite{Pol}). If
$\beta_1=|\lambda_2|$ and $\beta_2=\lambda_1$, then for any $\epsilon>0$
such that $\beta_1,\beta_2>\epsilon$, we have $m(\Lambda(\beta_1,\beta_2,\epsilon))=1$.
Let
\begin{align*}
\widetilde{\Lambda}=\bigcup_{k=1}^{\infty}\text{supp}(m|\Lambda(\beta_1,\beta_2,\epsilon)),
\end{align*}
then for every $\varphi\in C^0(M)$  and $c\in \mathbb{R}$,
the following hold:
\begin{enumerate}
\item[(1)] For $\psi\in \mathcal{V}^+$, we have
\begin{align*}
\overline{R}(\varphi,[c,\infty))
\leq\sup\left\{h_\nu(f)-\int\psi d\nu:\nu\in \mathscr M_{\rm inv}(M,f),\int\varphi d\nu\geq c\right\}.
\end{align*}
\item[(2)] For $\psi\in \mathcal{V}^-$, we have
\begin{align*}
\underline{R}(\varphi,(c,\infty))
\geq\sup\left\{h_\nu(f)-\int\psi d\nu:
\mu\in \mathscr M_{\rm inv}(\widetilde{\Lambda},f),\int\varphi d\nu> c\right\}.
\end{align*}
\end{enumerate}

\noindent
{\bf Example 2 Nonuniformly hyperbolic systems} In \cite{Kat1}, Katok described a construction
of a diffeomorphism on the 2-torus $\mathbb{T}^2$ with nonzero Lyapunov exponents, which is not an Anosov map.
Let $f_0$ be a linear automorphism given by the matrix
\begin{align*}
  A=\begin{pmatrix}
   2&1\\
   1&1
  \end{pmatrix}
\end{align*}
with eigenvalues $\lambda^{-1}<1<\lambda$.
$f_0$ has a maximal measure $\mu_1$. Let $D_r$ denote the disk of radius $r$ centered
at (0,0), where $r>0$ is small, and put coordinates $(s_1,s_2)$ on $D_r$ corresponding to
the eigendirections of $A$, i.e, $A(s_1,s_2)=(\lambda s_1,\lambda^{-1}s_2)$.
The map $A$ is the time-1 map of the local flow in $D_r$
generated by the following system of differential equations:
\begin{align*}
\frac{ds_1}{dt}=s_1\log\lambda, \frac{ds_2}{dt}=-s_2\log\lambda.
\end{align*}
The Katok map is obtained from $A$ by slowing down these equations near the origin.
It depends upon a real-valued function $\psi$, which is defined on the unit interval $[0,1]$
and has the following properties:
\begin{enumerate}
\item[(1)] $\psi$ is $C^\infty$ except at 0;
\item[(2)] $\psi(0)=0$ and $\psi(u)=1$ for $u\geq r_0$ where $0<r_0<1$;
\item[(3)] $\psi^{\prime}(u)>0$ for every $0<u<r_0$;
\item[(4)] $\int_0^1\frac{du}{\psi(u)}<\infty$.
\end{enumerate}
Fix sufficiently small numbers $r_0<r_1$ and consider the time-1 map $g$ generated
by the following system of differential equations in $D_{r_1}$:
\begin{align*}
\frac{ds_1}{dt}=s_1\psi(s_1^2+s_2^2)\log\lambda, \frac{ds_2}{dt}=-s_2\psi(s_1^2+s_2^2)\log\lambda.
\end{align*}
The map $f$, given as $f(x)=g(x)$ if $x\in D_{r_1}$ and $f(x)=A(x)$ otherwise, defines a homeomorphism of
torus, which is a $C^\infty$ diffeomorphism everywhere except for the origin. To provide the differentiability
of map $f$, the function $\psi$ must satisfy some extra conditions. Namely, the integral $\int_0^1du/\psi$ must
converge ``very slowly" near the origin. We refer the smoothness to \cite{Kat1}. Here $f$ is contained in the
$C^0$ closure of Anosov diffeomorphisms and even more there is a homeomorphism $\pi:\mathbb{T}^2\to \mathbb{T}^2$
such that $\pi\circ f_0=f\circ \pi$. Let $\nu_0=\pi_\ast\mu_1$.

In \cite{LiaLiaSUnTia}, the authors proved
that there exist $0< \epsilon\ll \beta$ and a neighborhood $U$ of $\nu_0$ in
$\mathscr M_{\rm inv}(\mathbb{T}^2,f)$ such that for any ergodic $\nu\in U$ it holds that
$\nu\in \mathscr M_{\rm inv}(\widetilde{\Lambda}(\beta,\beta,\epsilon),f)$, where
$\widetilde{\Lambda}(\beta,\beta,\epsilon)=\bigcup_{k\geq1}{\rm supp}(\nu_0|\Lambda_k(\beta,\beta,\epsilon))$.

\begin{cro}
For every $\varphi\in C^0(\mathbb{T}^2)$  and $c\in \mathbb{R}$,
the following hold:
\begin{enumerate}
\item[(1)] For $\psi\in \mathcal{V}^+$, we have
\begin{align*}
\overline{R}(\varphi,[c,\infty))
\leq\sup\left\{h_\nu(f)-\int\psi d\nu:\nu\in \mathscr M_{\rm inv}(\mathbb{T}^2,f),\int\varphi d\nu\geq c\right\}.
\end{align*}
\item[(2)] For $\psi\in \mathcal{V}^-$, we have
\begin{align*}
\underline{R}(\varphi,(c,\infty))
\geq\sup\left\{h_\nu(f)-\int\psi d\nu:
\mu\in \mathscr M_{\rm inv}(\widetilde{\Lambda}(\beta,\beta,\epsilon),f),\int\varphi d\nu> c\right\}.
\end{align*}
\end{enumerate}
\end{cro}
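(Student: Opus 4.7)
The plan is to derive this corollary as a direct application of Theorem~\ref{thm2.2}, with the reference ergodic hyperbolic measure taken to be $\nu_0=\pi_\ast\mu_1$. To do so, I need to check three points: (i) $f$ is a $C^{1+\alpha}$ diffeomorphism of $\mathbb{T}^2$; (ii) $\nu_0$ is an ergodic hyperbolic measure for $f$; (iii) for appropriately chosen parameters the Pesin set $\Lambda(\beta,\beta,\epsilon)$ is nonempty, with $\beta\leq\lambda^+(\nu_0)$ and $\beta\leq\lambda^-(\nu_0)$, so that Theorem~\ref{thm2.2} applies with $\mu=\nu_0$.

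First I would recall from Katok \cite{Kat1} that under the conditions (1)--(4) imposed on $\psi$, together with the requirement that $\int_0^1 du/\psi$ converges sufficiently slowly near the origin, the resulting map $f$ is $C^{1+\alpha}$ on the whole of $\mathbb{T}^2$: away from $0$ the map equals $A$ or the $C^\infty$ time-one map $g$, and the slowdown function $\psi$ is designed precisely to preserve $C^{1+\alpha}$-regularity at the origin. For ergodicity and hyperbolicity of $\nu_0$, I would use that $\mu_1$ is the (Bernoulli, hence ergodic) measure of maximal entropy for the Anosov automorphism $f_0$, that $\pi$ is a topological conjugacy between $f_0$ and $f$, and that Katok's construction is engineered so that $\nu_0=\pi_\ast\mu_1$ has Lyapunov spectrum $\{\log\lambda,-\log\lambda\}$; in particular $\nu_0$ is ergodic and hyperbolic with $\lambda^+(\nu_0)=\lambda^-(\nu_0)=\log\lambda$.

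Point (iii) is then delivered by the Liao--Liao--Sun--Tian result quoted in the paragraph just above the corollary: for suitable $0<\epsilon\ll\beta\leq\log\lambda$ one has $\nu_0\in\mathscr M_{\rm inv}(\widetilde{\Lambda}(\beta,\beta,\epsilon),f)$, so the Pesin block $\Lambda(\beta,\beta,\epsilon)$ is nonempty and $\widetilde{\Lambda}_{\nu_0}=\widetilde{\Lambda}(\beta,\beta,\epsilon)$ in the notation of Section~2. With these hypotheses verified, part~(1) of the corollary is a verbatim restatement of Theorem~\ref{thm2.2}(1), while part~(2) is obtained by applying Theorem~\ref{thm2.2}(2) to the ergodic measure $\mu=\nu_0$ and then identifying $\widetilde{\Lambda}_{\nu_0}$ with $\widetilde{\Lambda}(\beta,\beta,\epsilon)$.

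The proof is thus essentially a verification of hypotheses, and there is no genuinely new dynamical content beyond the main theorem. The only mild obstacle is to make sure the parameters $\beta$ and $\epsilon$ appearing in the statement of the corollary are consistent with both the definition of the Pesin set (which requires $\beta\gg\epsilon>0$) and the Liao--Liao--Sun--Tian framework; since $\lambda^\pm(\nu_0)=\log\lambda>0$, choosing $\beta\leq\log\lambda$ and $\epsilon$ small enough accomplishes this uniformly.
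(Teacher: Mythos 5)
Your proof is correct and follows essentially the same route the paper implicitly takes: the corollary is a direct instance of Theorem~\ref{thm2.2} (equivalently Corollary~\ref{cro2.1}) applied to the ergodic hyperbolic measure $\nu_0=\pi_\ast\mu_1$, with the nonemptiness of the Pesin block and the identification $\widetilde{\Lambda}_{\nu_0}=\widetilde{\Lambda}(\beta,\beta,\epsilon)$ supplied by the quoted Liang--Liao--Sun--Tian result. The only slight overstatement in your write-up is the assertion that $\lambda^+(\nu_0)=\lambda^-(\nu_0)=\log\lambda$: the topological conjugacy $\pi$ and the Margulis--Ruelle inequality give $\lambda^{\pm}(\nu_0)\geq h_{\nu_0}(f)=\log\lambda>0$, which already yields hyperbolicity and lets one choose $\beta\leq\log\lambda$; exact equality is not needed and is not asserted in the paper.
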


In \cite{LiaLiaSUnTia}, the authors also studied the structure of Pesin set $\widetilde{\Lambda}$
for the robustly transitive partially hyperbolic diffeomorphisms described
by Ma\~{n}\'{e} and the robustly transitive non-partially hyperbolic
diffeomorphisms described by Bonatti-Viana.
They showed that for the diffeomorphisms derived from
Anosov systems $\mathscr M_{\rm inv}(\widetilde{\Lambda},f)$
enjoys many members. So our result is applicable to these maps.


\noindent {\bf Acknowledgements.}   The research was supported by
the National Basic Research Program of China
(Grant No. 2013CB834100) and the National Natural
Science Foundation of China (Grant No. 11271191).

\end{document}